\documentclass[12pt, reqno]{amsart}
\usepackage{amsmath, amsthm, amscd, amsfonts, amssymb, graphicx, color}

\newtheorem{df}{Definition}[section]
\newtheorem{thm}[df]{Theorem}
\newtheorem{pro}[df]{Proposition}

\newtheorem{rema}[df] {Remark}

\textheight 22.5truecm \textwidth 14.5truecm
\setlength{\oddsidemargin}{0.35in}\setlength{\evensidemargin}{0.35in}

\setlength{\topmargin}{-.5cm}
\begin{document}
\setcounter{page}{1}

\title[solvable Lie algebras of operators]{A spectral theory for solvable\\ Lie algebras of operators}
\author{Enrico Boasso and Angel Larotonda}
\begin{abstract} The main objective of this paper is to develop a notion of joint spectrum for
complex solvable Lie algebras of operators acting on a Banach space, which 
generalizes the Taylor joint spectrum (T.J.S.) for several commuting operators. 
\end{abstract}
\maketitle

\section{ Introduction}
\indent We briefly recall the definition of the Taylor joint spectrum. 
Let $\wedge (\Bbb C^n)$ be 
the complex exterior algebra on $n$ generators $e_1,\ldots ,e_n$ with
multiplication denoted by $\wedge$. Let $E$ be a Banach space and $a=
(a_1,\ldots ,a_n)$ a mutually commnuting $n$-tuple of bounded linear operators on E
(m.c.o.). Define $\wedge_k^n (E)=\wedge_k (\Bbb C^n )\otimes_{\Bbb C}E$ and $D_{k-1}$ by:
$$
D_{k-1}\colon \wedge_k^n (E)\to \wedge_{k-1}^n (E)
$$
$$
D_{k-1}(x\otimes e_{i_1}\wedge \cdots \wedge e_{i_k})=\sum_{j=1}^k (-1)^{j+1} x\cdot a_{i_j}
\otimes e_{i_1}\wedge \cdots\wedge \tilde{e}_{i_j}\wedge\cdots\wedge e_{i_k},
$$
where $\tilde{}$ means deletion and 
$k\ge 1$. Also define $D_k=0$ for $k\le 0$.\par

\indent It is easy to prove that $D_kD_{k+1}=0$ for all $k$, that is, $(\wedge_k^n (E), 
D_k)_{k\in\Bbb Z}$ is a chain complex, which is called the Koszul complex associated 
to $a$ and $E$ and it is denoted by $R(E,a)$. The $n$-tuple $a$ is said to be invertible or 
nonsingular on $E$, if $R(E,a)$ is exact, i.e., $Ker (D_k)=Ran (E_{k+1})$ for all $k$.
The Taylor spectrum of $a$ on $E$ is $Sp(a,E)=\{\lambda\in\Bbb C^n: a-\lambda
\hbox{ is not invertible}\}$.\par
\indent Unfortunately, this definition depends very strongly on $a_1,\ldots ,a_n$ and not 
on the vector subspace of $L(E)$ generated by then, i.e.,
$\langle a\rangle$.\par
\indent As we consider Lie algebras, which involve naturally geometry, we are 
interested in a geometrical approach to the spectrum which depends on $L$  rather than on 
a particular set of operators.\par
\indent This is done in section 2. Given a solvable Lie subalgebra of $L(E)$, $L$, we associate to 
it a set in $L^*$, $Sp(L,E)$.\par
\indent This object has the classical properties, i.e., $Sp(L,E)$ is a compact set, if $L{'}$ is
an ideal of $L$, then $Sp(L{'},E)$ is the projection of $Sp(L,E)$ on 
$L{'}^*$, and finally, $Sp(L,E)$ is a non empty set.\par
\indent Besides, it satisfy other interesting properties.\par
\indent  If $x\in L^2$, then $Sp(x)=0$. Furthermore, if $L$ is nilpotent, one has the inclusion
$$
Sp(L,E)\subseteq \{ f \in [L,L ]^{\perp}: \forall x \in L,\hbox{ } \mid f(x)\mid\le \parallel x\parallel \}. 
$$
\indent However, the spectral mapping property is ill behaved.\par
\section{The joint spectrum for solvable Lie algebras of operators}
\indent First of all we establish a proposition which will be used in the definition of 
$Sp(L,E)$.\par
\indent From now on $L$ denotes a complex finite dimensional solvable Lie algebra
and $U(L)$ its enveloping algebra.\par
\indent Let $f$ belong to $L^*$ such that $f([L,L])=0$, i.e., $f$ is a character of $L$.
Then $f$ defines a one dimensional representation of $L$ denoted by $\Bbb C(f)$. 
Let $\epsilon (f)$ be the augmentation of $U(L)$ defined by $f$:
$$
\epsilon (f)\colon U(L)\to \Bbb C(f),
$$
$$
\epsilon (f) (x)=f(x)\hskip.5cm (x\in L).
$$
\indent Let us consider the pair of spaces and maps $V(L)=(U(L)\otimes \wedge L, 
\overline{d}_{p-1})$, where $\overline{d}_{p-1}$ is the map defined by:
$$
\overline{d}_{p-1}\colon U(L)\otimes\wedge^p L\to U(L)\otimes \wedge^{p-1}L.
$$
If $p\ge 1$
\begin{align*}
\overline{d}_{p-1}&\langle x_{i_1}\cdots x_{i_p}\rangle=\sum_{k=1}^p 
(-1)^{k+1}(x_{i_k}-f(x_{i_k}))\langle x_{i_1}\wedge\cdots\wedge \hat{x}_{i_k}\wedge\cdots\wedge x_{i_p}\rangle\\
& + \sum_{1\le k<l\le p}(-1)^{k+l}\langle [x_{i_k},x_{i_l}]\wedge x_{i_1}\wedge\cdots\wedge\hat{x}_{i_k}\wedge\cdots\wedge
\hat{x}_{i_l}\wedge\cdots\wedge x_{i_p}\rangle\\
\end{align*}

where $\hat{ }$ means deletion. If $p\le 0$, we also define $\overline{d}_p=0$. \par
\begin{pro}\label{pro1}The pair of spaces and maps $V(L)$ is a chain complex. 
Furthermore, with the augmentation $\epsilon (f)$, the complex $V(L)$ is a $U(L)$-free
resolution of $\Bbb C(f)$ as a left $U(L)$-module.\end{pro}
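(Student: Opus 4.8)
The plan is to recognize $V(L)$ as a twisted form of the classical Chevalley--Eilenberg (Koszul) complex resolving the trivial $U(L)$-module, and to deduce all the assertions by transporting that resolution along an automorphism of $U(L)$ produced by the character $f$. Write $W(L)=(U(L)\otimes\wedge^{\bullet}L,\,d_{\bullet})$ for the complex whose differential $d_{p-1}\colon U(L)\otimes\wedge^{p}L\to U(L)\otimes\wedge^{p-1}L$ is given by the same formula that defines $\overline{d}_{p-1}$, but with $f$ replaced by $0$. It is classical that $W(L)$ is a chain complex and that, with the standard augmentation $\epsilon\colon U(L)\to\Bbb C$ (the algebra map sending each $x\in L$ to $0$), the augmented complex $\cdots\to U(L)\otimes\wedge^{1}L\to U(L)\xrightarrow{\epsilon}\Bbb C\to 0$ is exact, each $U(L)\otimes\wedge^{p}L$ being finitely generated free over $U(L)$; failing a citation, this follows by filtering $U(L)$ by its standard filtration, identifying the associated graded complex with the Koszul resolution of $\Bbb C$ over $S(L)\cong\Bbb C[x_{1},\dots ,x_{n}]$, and lifting exactness through the filtered-complex spectral sequence.

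Next I would use that $f$ is a character. Since $f([L,L])=0$, the linear map $L\to U(L)$, $x\mapsto x-f(x)\cdot 1$, is a homomorphism of Lie algebras: in $U(L)$ one has $[x-f(x),\,y-f(y)]=[x,y]$ because scalars are central, while $[x,y]-f([x,y])=[x,y]$ because $f$ kills $[L,L]$. Hence it extends to an algebra endomorphism $\sigma$ of $U(L)$, and $\sigma$ is an automorphism with inverse induced by $x\mapsto x+f(x)$. Put $T_{p}=\sigma\otimes\mathrm{id}_{\wedge^{p}L}$ on $U(L)\otimes\wedge^{p}L$; each $T_{p}$ is bijective and $\sigma$-semilinear, i.e. $T_{p}(u\cdot\xi)=\sigma(u)\cdot T_{p}(\xi)$. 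Using $\sigma(u\,x_{i_{k}})=\sigma(u)(x_{i_{k}}-f(x_{i_{k}}))$ and $\sigma([x_{i_{k}},x_{i_{l}}])=[x_{i_{k}},x_{i_{l}}]$, a short check on the generators $1\otimes x_{i_{1}}\wedge\cdots\wedge x_{i_{p}}$, extended by semilinearity, yields $T_{p-1}\circ d_{p-1}=\overline{d}_{p-1}\circ T_{p}$ for every $p$; moreover $\epsilon(f)\circ\sigma=\epsilon$, since $\epsilon(f)(x-f(x))=0=\epsilon(x)$. Thus $(T_{p})_{p}$, together with the identity on the coefficient module, is an isomorphism, in the category of complexes of vector spaces, from the augmented complex $W(L)\to\Bbb C$ onto the augmented complex $V(L)\to\Bbb C(f)$; it is not an isomorphism of complexes of $U(L)$-modules, as it twists the module structure by $\sigma$.

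From this the proposition is immediate. First, $d_{p-1}d_{p}=0$ and the invertibility of the $T_{p}$ give $\overline{d}_{p-1}\overline{d}_{p}=(T_{p-1}d_{p-1}T_{p}^{-1})(T_{p}d_{p}T_{p+1}^{-1})=T_{p-1}d_{p-1}d_{p}T_{p+1}^{-1}=0$, so $V(L)$ is a chain complex. Second, exactness of a complex is a property of its underlying complex of vector spaces, so the augmented complex $\cdots\to U(L)\otimes\wedge^{1}L\to U(L)\xrightarrow{\epsilon(f)}\Bbb C(f)\to 0$ is exact; since its differentials and its augmentation $\epsilon(f)$ are visibly left $U(L)$-linear and each $U(L)\otimes\wedge^{p}L$ is finitely generated free over $U(L)$, this is precisely a $U(L)$-free resolution of $\Bbb C(f)$.

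The argument is entirely formal once $\sigma$ is available, so the single genuinely substantive ingredient --- the step I would flag as the crux --- is the classical exactness of the untwisted Koszul complex $W(L)$, handled by the associated-graded reduction to the Koszul complex of the regular sequence $x_{1},\dots ,x_{n}$ in $S(L)$. The only fine point to watch is that $T$ is not $U(L)$-linear, so the claims about freeness of the terms and about the module-theoretic meaning of the augmentation $\epsilon(f)$ must be checked on the $V(L)$ side (where, as above, they are transparent) rather than simply carried across through $T$.
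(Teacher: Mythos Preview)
Your argument is correct: the shift automorphism $\sigma$ induced by $x\mapsto x-f(x)$ (available precisely because $f$ is a character) intertwines $V(L)$ with the classical Chevalley--Eilenberg resolution $W(L)$, and the remaining checks on freeness and $U(L)$-linearity are made on the $V(L)$ side as you note. This is exactly in the spirit of the paper, which omits the proof entirely and simply remarks that it is a straightforward generalization of \cite[Chapter~XIII, Theorem~7.1]{CE}; your write-up is the natural way to make that remark precise.
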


\indent We omit the proof of Proposition \ref{pro1} because it is a straightforward generalization
of \cite[Chapter XIII, Theorem 7.1]{CE}.\par
\indent Let $L$ be as usual. From now on $E$ denotes a Banach space on which $L$ acts
as right continuous operators, i.e., $L$ is a Lie subalgebra of $L(E)$ with the opposite
product. Then, by \cite[Chapter XIII, Section 1]{CE}, $E$ is a right $U(L)$ module.\par

\indent If $f$ is a character of $L$, by Proposition \ref{pro1} and elementary homological algebra, 
the $q$-homology space of the complex $(E\otimes\wedge L, d(f))$ is
$Tor_q^{U(L)}(E, \Bbb C(f))$ $
=H_q(L,E\otimes \Bbb C(f))$.\par
\indent We now state our definition.\par
\begin{df}\label{df2} Let $L$ and $E$ be as above. The set $\{  f\in L^*: f(L^2)=0, 
H_*(L, E\otimes \Bbb C(f))\hbox{ is non-zero}\}$, is the spectrum of $L$ acting 
on $E$, and it is denoted by $Sp(L,E)$.\end{df}
\indent By Proposition \ref{pro1} and Definition \ref{df2} it is clear that if $L$ is a commutative 
algebra, then $Sp(L,E)$ reduces to the Taylor joint spectrum.\par
\indent Let's see an example. Let $(E, \parallel\hbox{} \parallel )$  be 
$(\Bbb C^2, \parallel\hbox{}\parallel_2 )$ and $a$, $b$ the opertors
$$
a=\begin{pmatrix}
               0&\frac{1}{2}\\
                \frac{1}{2}&0\\
\end{pmatrix},
\hskip1truecm
b=\begin{pmatrix}
1&1\\
-1&-1\\
\end{pmatrix}.
$$

It is easy to prove that $[b,a]=b$, and then the vector space 
$\Bbb C(b)\oplus\Bbb C(a)=L$ is a solvable Lie subalgebra of 
$L(\Bbb C^2)$.\par
\indent Using Definition \ref{df2}, a standard calculation shows that 
$Sp(L,E)= \{ f\in \Bbb C^{2*}: f(b)=0, \hbox{ } f(a)=\frac{1}{2},
\hbox{ }f(a)=\frac{-3}{2}\}$.\par
\indent Observe that, $\parallel a\parallel =\frac{1}{2}$; however,
$Sp(L,E)$ is not contained in $\{ f\in\Bbb C^{2*}: \forall x\in \Bbb C^2
\mid f(x)\mid\le \parallel x\parallel \}$.\par

\section{Fundamental properties of the sepctrum}

\indent In this section we shall prove that the most important properties 
of spectral theory are satisfied by our spectrum.\par
\vskip1cm
\begin{thm}\label{thm3}Let $L$ and $E$ be as usual. Then $Sp(L,E)$
is a conpact set of $L^*$.\end{thm}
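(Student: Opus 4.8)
The plan is to prove that $Sp(L,E)$ is a closed and bounded subset of the finite dimensional space $V=\{f\in L^{*}:f(L^{2})=0\}$, which by Definition \ref{df2} contains it and is a linear, hence closed, subspace of $L^{*}$; since $V$ is finite dimensional, being closed and bounded will give compactness. For a character $f$ write $C_{\bullet}(f)=(E\otimes\wedge L,d(f))$; by the remarks preceding Definition \ref{df2} its $q$-th homology is $H_{q}(L,E\otimes\Bbb C(f))$, so for $f\in V$ one has $f\notin Sp(L,E)$ precisely when $C_{\bullet}(f)$ is exact. Each $E\otimes\wedge^{p}L$ is a finite direct sum of copies of $E$, and the formula for $\overline d_{p-1}$ shows that $d(f)$ is a bounded operator depending affinely, hence norm-continuously, on $f$.

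First I would check that the exact locus $\{f\in V:C_{\bullet}(f)\text{ exact}\}$ is open, so that $Sp(L,E)$ is closed. This is the standard perturbation argument of multioperator spectral theory: an exact bounded complex of Banach spaces stays exact under small perturbations of its boundary maps. Concretely, if $C_{\bullet}(f_{0})$ is exact it admits a bounded contracting homotopy $h$, $d(f_{0})h+hd(f_{0})=\mathrm{id}$; since $d(f)$ is again a differential by Proposition \ref{pro1}, for $f$ near $f_{0}$ one gets $d(f)h+hd(f)=\mathrm{id}+R(f)$ with $\|R(f)\|\to 0$, and then $\mathrm{id}+R(f)$ is invertible and commutes with $d(f)$, so $(\mathrm{id}+R(f))^{-1}h$ is a contracting homotopy for $C_{\bullet}(f)$.

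The heart of the proof is boundedness. Put $N=L^{2}=[L,L]$, a nilpotent ideal of $L$, so $L/N$ is abelian, and let $\bar f$ be the character of $L/N$ induced by $f$. Since $f$ vanishes on $N$, the module $E\otimes\Bbb C(f)$ restricts to $N$ as $E$, and the Hochschild--Serre spectral sequence for Lie algebra homology reads
$$
E^{2}_{p,q}=H_{p}\big(L/N,\ H_{q}(N,E)\otimes\Bbb C(\bar f)\big)\ \Longrightarrow\ H_{p+q}\big(L,E\otimes\Bbb C(f)\big),
$$
where $L/N$ acts on $H_{q}(N,E)$ through the $L$-action on $E$. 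As $L/N$ is abelian, for fixed $q$ the groups $E^{2}_{p,q}$ ($p\ge 0$) are the homology of the twisted Koszul complex of $L/N$ acting on $H_{q}(N,E)$, which is exact --- so $E^{2}_{p,q}=0$ for all $p$ --- as soon as, for some $y\in L$ with nonzero class in $L/N$, the operator $y-f(y)$ is invertible on $H_{q}(N,E)$ (here $y$ acts through $L/N$). Now $H_{\bullet}(N,E)$ is computed by the Chevalley--Eilenberg complex $\wedge^{\bullet}N\otimes E$, on which $y-f(y)$ acts as a chain map with degree-$q$ component $\mathrm{ad}_{y}|_{\wedge^{q}N}\otimes 1+1\otimes(y_{0}-f(y))$, $y_{0}$ being the action of $y$ on $E$. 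Since $\wedge^{q}N$ is finite dimensional, the spectrum of this component equals $\sigma\big(\mathrm{ad}_{y}|_{\wedge^{q}N}\big)+\sigma(y_{0}-f(y))$ and --- uniformly in $q$ --- avoids $0$ once $|f(y)|>\|y_{0}\|+(\dim N)\,\rho\big(\mathrm{ad}_{y}|_{N}\big)=:\theta_{y}$; in that case $y-f(y)$ is invertible in each degree, hence a chain isomorphism, hence invertible on every $H_{q}(N,E)$. Choosing a basis of $L$ made of a basis of $N$ together with lifts $y_{1},\dots,y_{m}$ of a basis of $L/N$, we conclude that $|f(y_{j})|>\theta_{y_{j}}$ for some $j$ forces $E^{2}_{p,q}=0$ for all $p,q$, hence $H_{*}(L,E\otimes\Bbb C(f))=0$ and $f\notin Sp(L,E)$. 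Therefore $Sp(L,E)\subseteq\{f\in V:|f(y_{j})|\le\theta_{y_{j}},\ 1\le j\le m\}$, which is bounded; with the preceding step, $Sp(L,E)$ is compact.

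I expect the routine ingredients to be the Banach-complex perturbation lemma and the Hochschild--Serre spectral sequence, and the genuine point to be the last paragraph: identifying the $L/N$-module structure on $H_{q}(N,E)$ and extracting from it the spectral estimate that confines $Sp(L,E)$ to a fixed box. That this box is larger than $\{|f(x)|\le\|x\|\}$ is caused precisely by the adjoint action of $L$ on $N$, in accordance with the sharp bound stated in the introduction for nilpotent $L$ and with the example of Section 2.
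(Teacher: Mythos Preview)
Your argument is correct but organized differently from the paper's. For closedness both proofs amount to the same thing: the paper simply cites Taylor's lemma that exactness of a continuously parameterized family of bounded complexes is an open condition, and your contracting-homotopy perturbation is the standard proof of that lemma. For boundedness the paper avoids any spectral sequence: for each basis vector $X_i$ outside $L^2$ it uses the codimension-one ideal $L_i$ spanned by the remaining basis vectors, splits $E\otimes\wedge^p L=(E\otimes\wedge^p L_i)\oplus(E\otimes\wedge^{p-1}L_i)\wedge\langle X_i\rangle$, and identifies the ``transversal'' part of $d_{p-1}(f)$ as an operator $L_p$ on $E\otimes\wedge^p L_i$ which, in a suitable basis of $\wedge^p L_i$, is upper triangular with diagonal entries $X_i-\xi_i+\alpha^p_{jj}$ (the scalars $\alpha^p_{jj}$ coming from $\mathrm{ad}_{X_i}$ on $\wedge^p L_i$); for $|\xi_i|$ large $L_p$ is invertible and $S_p=(-1)^{p+1}L_p^{-1}\wedge\langle X_i\rangle$ is an explicit contracting homotopy for the whole complex. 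Your Hochschild--Serre reduction to the abelian quotient $L/L^2$ is more structural and makes transparent \emph{why} the naive bound $|f(y)|\le\|y\|$ must be enlarged --- your correction term $(\dim N)\,\rho(\mathrm{ad}_y|_N)$ is exactly what the paper's diagonal shifts $\alpha^p_{jj}$ encode --- whereas the paper's route is more elementary (no spectral-sequence machinery, no need to analyze the induced $L/N$-action on $H_q(N,E)$) and hands you the homotopy in closed form.
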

\begin{proof}
\indent Let us consider the family of spaces and maps $(E\otimes\wedge^i L,
d_{i-1}(f))$, where $f\in L^{2^{\perp}}$ and $L^{2^{\perp}}=\{ f\in L^*: f(L^2)=0\}$.
This family is a parameterized chain complex on $L^{2^{\perp}}$.
By Taylor \cite[Theorem 2.1]{T} the set $\{ f\in L^{2^{\perp}}: (E\otimes\wedge^i L,d_{i-1}(f))
\hbox{ is exact}\}=Sp(L,E)^c$ is an open set in $L^{2^{\perp}}$.
Then, $Sp(L,E)$ is closed in $L^{2^{\perp}}$ and hence in $L^*$.\par
\indent To verify that $Sp(L,E)$ is a compact set, we consider a basis of $L^2$
and we extend it to a basis of $L$, $\{X_i\}_{1\le i\le n}$. If $K=\dim L^2$, 
$n=\dim L$ and $i\ge K+1$, then let $L_i$ be the ideal generated by $\{X_j\}_{1\le j\le n, j\neq i}$.\par
\indent Ler $f$ be a character of $L$ and represent it in the dual basis of 
$\{X_i\}_{1\le i\le n}$, $\{f_i\}_{1\le i\le n}$, $f=\sum_{i=K+1}^n
\xi_i f_i$. For each $i$, there is a positive number $r_i$ such that if $\xi_i\ge r_i$,
$$
Tor_p^{U(L)}(E,\Bbb C(f))=H_p(E\otimes\wedge^i ,d_{i-1}(f))=0\hskip.3cm\forall p.
$$
To prove our last statement, we shall construct an homotopy operator for the
chain complex $(E\otimes\wedge^p L,d_{p-1}(f))$, $(f(L^2)=0)$.\par
\indent First of all we observe that
$$
E\otimes\wedge^pL=(E\otimes \wedge^p L_i)\oplus(E\otimes \wedge^{p-1}L_i)
\wedge\langle X_i\rangle.
$$
\indent  As $L_i$ is an ideal of $L$, $d_{p-1} (E\otimes \wedge^p L_i)\subseteq E\otimes
\wedge^{p-1} L_i$. On the other hand, there is a bounded operator $L_{p-1}$
such that
$$
d_{p-1}(f)(a\wedge \langle X_i\rangle )=(d_{p-1}(f)(a))\wedge \langle X_i\rangle +
(-1)^p L_{p-1}(a), a\in E\otimes\wedge^{p-1}L_i.
$$
\indent It is easy to prove that, for each $p$, there is a basis of $\wedge^p L_i$,
$\{V_j^p\}$ $1\le j\le \dim \wedge^p L_i$, such that if we decompose
$$
E\otimes \wedge^pL_i=\bigoplus_{1\le j\le \dim \wedge^pL_i} E\langle V_j\rangle,
$$
then $L_p$ has the following form: 

\begin{align*}
&L_{p_{ij}}=\alpha^p_{ij}\hskip1.5cm \hbox{ for }i<j,\\
&L_{p_{ij}}=X_i-\xi_i+\alpha^p_{jj}, \hskip.6cm \hbox{ for }i=j\\
&L_{p_{ij}}=0,\hskip2.5cm \hbox{ for } i>j,\\\end{align*}
\noindent where $\alpha^p_{ij}\in \Bbb C$.\par

\indent Besides, let $K_p$ be a positive real number such that
$$
\bigcup_{1\le j\le \dim \wedge^p L_i} Sp(X_i+\alpha_{jj}^p)\subseteq B[0,K_p]
$$
and $N_i=\hbox{\rm max}_{0\le p\le n-1} \{ K_p\}$. Then, as $L_p$ has a triangular form, 
a standard calculation shows that $L_p$ is a topological isomorphism of Banach
spaces if $\xi_i\ge N_i$.\par
\indent Outside $B[0,N_i]$ we construct our homotopy operator
$$
Sp\colon E\otimes\wedge^p L\to E\otimes \wedge^{p+1}L,
$$
$$
Sp\mid_{E\otimes \wedge^{p-1}L_i\wedge\langle X_i\rangle}\equiv 0,
$$
$$
Sp\colon E\otimes\wedge^p L_i\to E\otimes \wedge^p L_i\wedge \langle X_i\rangle,
$$
$$
Sp=(-1)^{p+1}L_p^{-1}\wedge\langle X_i\rangle.
$$
\indent From the definition of $L_p$ we have the following identity:
$$
(-1)^{p+2}S_{p-1}d_{p-1}(f)L_p=d_{p-1}(f)\wedge\langle X_i\rangle.
$$
\indent The above identity and a standard calculation prove that $Sp$ is a 
homotopy operator, i.e., $d_pS_p +S_{p-1}d_{p-1}=I$. Therefore, $Sp(L,E)$
is a compact set.\end{proof}

\begin{thm}\label{thm4} (Projection Property) Let $L$ and $E$ be as usual, and let $I$
be an ideal of $L$. Let $\pi$ be the projection map from $L^*$ onto $I^*$, then
$$
Sp(I,E)=\pi (Sp(L,E)).
$$\end{thm}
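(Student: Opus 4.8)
The plan is to prove the two inclusions $\pi(Sp(L,E))\subseteq Sp(I,E)$ and $Sp(I,E)\subseteq \pi(Sp(L,E))$ separately, the first by a Hochschild--Serre spectral sequence argument and the second by a quotient/induction argument on the characters.

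For the inclusion $\pi(Sp(L,E))\subseteq Sp(I,E)$, I would take a character $f$ of $L$ lying in $Sp(L,E)$ and set $g=\pi(f)=f|_I$, which is a character of $I$ since $[I,I]\subseteq[L,L]\subseteq\ker f$. The key tool is the Hochschild--Serre spectral sequence for the ideal $I\trianglelefteq L$ with coefficients in the $L$-module $E\otimes\Bbb C(f)$:
$$
E^2_{pq}=H_p\bigl(L/I,\,H_q(I,E\otimes\Bbb C(f))\bigr)\Longrightarrow H_{p+q}(L,E\otimes\Bbb C(f)).
$$
If $g\notin Sp(I,E)$, then $H_q(I,E\otimes\Bbb C(f))=H_q(I,E\otimes\Bbb C(g))=0$ for all $q$ (note that as an $I$-module $E\otimes\Bbb C(f)$ depends on $f$ only through $g$), so the $E^2$-page vanishes identically, forcing $H_*(L,E\otimes\Bbb C(f))=0$, i.e. $f\notin Sp(L,E)$ — a contradiction. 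Hence $g\in Sp(I,E)$. One should check, or invoke from the earlier homological setup, that the spectral sequence is available in this Banach-module context; since we only need the qualitative statement ``all homology vanishes,'' no topological subtleties beyond exactness of the relevant complexes are required.

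For the reverse inclusion $Sp(I,E)\subseteq\pi(Sp(L,E))$, I would start with a character $g$ of $I$ with $H_*(I,E\otimes\Bbb C(g))\neq 0$ and must produce a character $f$ of $L$ extending $g$ with $H_*(L,E\otimes\Bbb C(f))\neq 0$. The natural approach is to run the same Hochschild--Serre spectral sequence: pick the largest $q$ with $H_q(I,E\otimes\Bbb C(g))\neq 0$; this gives a nonzero $L/I$-module $M=H_q(I,E\otimes\Bbb C(g))$ (the $L/I$-action coming from the conjugation action, which is well defined on homology). Since $L/I$ is a finite-dimensional solvable Lie algebra acting on the nonzero space $M$, by the analogue of Lie's theorem there is a character $h$ of $L/I$ with $H_*(L/I, M\otimes\Bbb C(h))\neq 0$ — more precisely one uses that $Sp(L/I,M)\neq\varnothing$, which is the nonemptiness property promised in the introduction applied to the finite-dimensional module $M$ (one must be careful that $M$ may not be a Banach space in an obvious way, so an alternative is to argue directly that a solvable Lie algebra acting on a nonzero finite-dimensional space has a common eigen-character, i.e. $M$ has a one-dimensional quotient). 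Setting $f=g$ on $I$ and $f=h$ on a complement determines a character of $L$ (using that $[L,L]\subseteq I+[I,I]\subseteq\ker$ of the combined functional — this needs $h$ to kill $[L/I,L/I]$, which holds since $h$ is a character), and the bottom-edge behaviour of the spectral sequence, together with maximality of $q$, shows $E^2_{p,q}$ survives to $E^\infty$ for suitable $p$, whence $H_{p+q}(L,E\otimes\Bbb C(f))\neq 0$.

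The main obstacle I anticipate is the second inclusion, specifically making rigorous the passage ``nonzero $L/I$-module $M$ $\Rightarrow$ a surviving entry in the spectral sequence.'' The subtlety is that even if $M\neq 0$ and $L/I$ is solvable, one needs the homology $H_p(L/I, M\otimes\Bbb C(h))$ to be nonzero for some $p$ and some character $h$, and then one needs a differential-chasing argument (using that $q$ was chosen maximal, so no differentials enter $E_{*,q}$ from above, and controlling differentials leaving it) to conclude the corresponding total homology is nonzero. A clean way around the finite-dimensional-Banach-space worry is to replace $M$ by a one-dimensional quotient $M\twoheadrightarrow\Bbb C(h)$ guaranteed by solvability, functorially pushing the nonvanishing class forward; I would organize the write-up so that this reduction is done first, and then the spectral-sequence edge argument is purely formal. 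The first inclusion, by contrast, is routine once the spectral sequence is in place.
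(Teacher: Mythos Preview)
Your first inclusion is fine: once $f$ is a character of $L$, $E\otimes\Bbb C(f)$ is an $L$-module, Hochschild--Serre applies, and vanishing of $H_*(I,-)$ forces vanishing of $H_*(L,-)$. This is essentially the short exact sequence argument the paper invokes in the codimension-one case.

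The second inclusion, however, has a genuine gap, and it is precisely the heart of the theorem. You assert that $M=H_q(I,E\otimes\Bbb C(g))$ carries an $L/I$-action ``coming from the conjugation action,'' and later that gluing $g$ on $I$ with a character $h$ of $L/I$ produces a character of $L$. Both claims require $g([L,I])=0$, which is \emph{not} implied by $g([I,I])=0$. Concretely: the Lie derivative $\theta_x$ ($x\in L$) on $E\otimes\wedge^*I$ fails to commute with $d(g)$ by terms involving $g([x,\,\cdot\,])$, so there is no chain action and hence no induced $L/I$-module structure on $M$; and your displayed containment ``$[L,L]\subseteq I+[I,I]$'' does nothing, since $I+[I,I]=I$ and what you actually need is $g|_{[L,I]}=0$. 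Already for the two-dimensional algebra $[x,y]=y$ with $I=\Bbb C y$, a generic character of $I$ admits no extension to a character of $L$. So the spectral-sequence machinery cannot be set in motion from the $I$-side without first knowing that $g$ vanishes on $[L,I]$.

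The paper supplies exactly this missing ingredient, by an argument you do not anticipate. After reducing to $\dim(L/I)=1$, it shows (via a change-of-rings isomorphism) that $Sp(I,E)$ is invariant under the coadjoint action of the complex connected group $G(L)$ on $I^*$. Since $Sp(I,E)$ is compact (Theorem~\ref{thm3}) and bounded orbits of a complex connected group on a vector space are points, every $g\in Sp(I,E)$ is fixed by $G(L)$; differentiating gives $g([L,I])=0$. Only then does one know that every extension $\overline f$ of $g$ is a character of $L$, and a Taylor-type short exact sequence (or, equivalently, your Hochschild--Serre edge argument) finishes the job. Note also that your fallback ``$M$ has a one-dimensional quotient by Lie's theorem'' presumes $M$ finite-dimensional, which it need not be, and that invoking nonemptiness of $Sp(L/I,M)$ would be circular, since in the paper nonemptiness is deduced from the present theorem.
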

\begin{proof}
\indent By \cite[Section 5.3, Corollaire 3]{B}, there is a Jordan H\"older sequence of $L$ such that $I$ is one of 
its terms. Then, by means of an induction argument, we can assume $\dim(L/I)=1$.\par
\indent Let's consider the connected simply connected complex Lie group $G(L)$ such 
that its algebra is $L$, \cite[Part II-Lie Groups, Chapter 5]{S}.\par
\indent Let $Ad^*$ be the coadjoint representation of $G(L)$ in $L^*$: $Ad^*(g)(f)=fAd(g^{-1})$,
where $g\in G(L)$, $f\in L^*$ and $Ad$ is the adjoint representation of $G(L)$ in $L$.\par
\indent Let $f$ belong to $Sp(I,E)$. Then, as $I$ is an ideal of $L$, by \cite[Theorem 2.13.4]{V},
$Ad^*(g)(f)$ belongs to $I^*$. Besides, it is a character of $I$.
Then, one can restrict the coadjoint action of $G(L)$ to $I^*$.
Moreover, $Sp(I,E)$ is invariant under the coadjoint action of $G(L)$ in
$I^*$, i.e., if $f\in Sp(I,E)$, then
$Ad^*(g)(f)\in Sp(I,E)$, $\forall g\in G(L)$.\par
\indent To this end, it is enough to prove that
$$
(I)\hskip2truecm Tor_*^{U(I)}(E,\Bbb C(f))\cong Tor_*^{U(I)}(E,\Bbb C (h))
$$
where $h=Ad^*(g)(f)$, $g\in G(L)$.\par
\indent Let $\Gamma$ be the ring $U(I)$ and $\varphi$ the ring morphism
$$
\varphi=U(Adg)\colon U(I)\to U(I).
$$
\indent Let's consider the augmentation modules $(\Bbb C(f), E(f))$ and 
$(\Bbb C(h), E(h))$.\par
\indent Then, a standard calculation shows that the hypothesis of \cite[Chapter VIII, Theorem 3.1]{CE}
 are satisfied, which implies $(I)$.\par
\indent Thus, if $f\in Sp(I,E)$, the orbit $G(L)\cdot f\subseteq Sp(I,E)$. However, 
$Sp(I,E)$ is a compact set of $I^*$.\par
\indent As the only bounded orbits for an action of a complex connected Lie 
group on a vector space are points, then $G(L)\cdot f=f$.\par
\indent Let $\overline{f}$ be an extension of $f$ to $L^*$, and consider $\alpha=G(L)\cdot 
\overline{f}$, the orbit of $\overline{f}$ under the coadjoint action of $G(L)$ in $L^*$.\par
\indent As $G(L)\cdot f=f$, then as an analytic manifold\par

\hskip1.7cm (II)\hskip2.5truecm  $\dim\alpha \le 1$.\par

\indent Now suppose $\overline{f}$ is not a character of $L$, i.e.,
$\overline{f} (L^2)\ne 0$.\par
\indent Let $L^{\perp}$ be the following set: $L^{\perp}=\{ x\in L: \overline{f} ([X,L])=0\}$,
 and let $n$ be the dimension of $L$.\par
\indent As $I$ is an ideal of dimension $n-1$, $f(I^2)=0$ and $f(L^2)\neq 0$, by  \cite[Section 5.3, Corollaire 3]{B},
\cite[Chapitre IV, Section 4.1, Proposition 4.1.1]{BCV} and by \cite[Chapter 1, 1.2.8]{D}, we have $L^{\perp}\subset I$ and $\dim L^{\perp}=n-2$.\par
\indent Let's consider the analytic subgroup of $G(L)$ such that its Lie algebra is 
$L^{\perp}$.\par
\indent As the Lie algebra of the subgroup $G(L)_{\overline{f}}=\{ g\in G(L): Ad^*(g)(\overline{f})
=\overline{f}\}$ is $L^{\perp}$, the connected component of the identity of 
$G(L)_{\overline{f}}$ is $G(L^{\perp})$.\par
\indent Howecwer, by \cite[Lemma 2.9.2, Theorem 2.9.7]{V} $\alpha=G(L)\cdot\overline{f}$ satisfies the following
properties: $\alpha\cong G(L)/G(L)_{\overline{f}}$, and $\dim \alpha=\dim G(L)-\dim 
G(L)_{\overline{f}}=\dim G(L)-\dim G (L^{\perp})=\dim L-dim L^{\perp}=2$, which contradicts (II).
\par
\indent Then, $\overline{f}$ is a character of $L$.\par
\indent Thus, any extension $\overline{f}$ of and $f$ in $Sp(L,E)$ is a character of $L$.
\par
\indent However, as in \cite{T}, there is a short exact sequence of complexes
$$
0\to (\wedge^* I\otimes E , d(f))\to(\wedge^* L\otimes E , d(\overline{f}))\to
(\wedge^* I\otimes E , d(f))\to 0.
$$
\indent As $U(I)$ is a subring with unit of $U(L)$ and the complex involved in Definition \ref{df2}
differs from the one of  \cite{T} by a constant term, Taylor's joint argument of \cite[Lemma 3.1]{T}
still applies and then $Sp(I,E)=\pi (Sp(L,E)$.\end{proof}
\indent As a consequence of Theorem \ref{thm4} we have the following result.\par
\begin{thm}\label{thm5}Let $L$ and $E$ be as usual. Then $Sp(L,E)$ is not void.
\end{thm}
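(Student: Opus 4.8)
The plan is to proceed by induction on $n=\dim L$, the Projection Property (Theorem \ref{thm4}) doing all the work in the inductive step while the classical non-emptiness of the spectrum of a single bounded operator supplies the base case.

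For $n\le 1$ the statement is essentially classical. If $L=0$ then, since we are tacitly assuming $E\neq\{0\}$, the complex of Definition \ref{df2} attached to the unique character $f=0$ is concentrated in degree $0$ and equals $E$, whose homology is $E\neq\{0\}$; hence $0\in Sp(L,E)$. If $\dim L=1$, write $L=\mathbb{C}a$. Then $L^2=0$, every $f\in L^{*}$ is a character, and the complex $(E\otimes\wedge L,d(f))$ is just $0\to E\to E\to 0$ with differential $v\mapsto v\cdot(a-f(a))$ coming from the right $U(L)$-module structure. Its homology is non-zero precisely when $a-f(a)$ fails to be invertible, i.e. when $f(a)\in\sigma(a)$, the ordinary spectrum of the bounded operator $a$ on $E$. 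Since the spectrum of a bounded operator on a non-zero complex Banach space is non-empty, we may choose $f$ with $f(a)\in\sigma(a)$, and then $f\in Sp(L,E)$.

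Now let $n\ge 2$ and assume the theorem for solvable Lie algebras of dimension $<n$. As $L$ is solvable and non-zero, $L^2=[L,L]$ is a proper subspace of $L$; choose a hyperplane $I$ with $L^2\subseteq I\subsetneq L$. Then $[L,I]\subseteq[L,L]=L^2\subseteq I$, so $I$ is an ideal of $L$, it is again solvable, and $\dim I=n-1$. By the inductive hypothesis $Sp(I,E)\neq\emptyset$. Applying Theorem \ref{thm4} to the restriction map $\pi\colon L^{*}\to I^{*}$ gives $Sp(I,E)=\pi(Sp(L,E))$; since the left-hand side is non-empty, so is $\pi(Sp(L,E))$, whence $Sp(L,E)\neq\emptyset$. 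This closes the induction.

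The only genuinely analytic ingredient is thus the base case, which rests on the non-vanishing of $\sigma(a)$ for a single bounded operator; everything above that is the purely formal descent furnished by the Projection Property, so no new obstacle is expected. The one hypothesis that must be borne in mind throughout is $E\neq\{0\}$: over the zero space all the homology vanishes identically and the conclusion would fail, so (exactly as for the Taylor joint spectrum) this is assumed implicitly.
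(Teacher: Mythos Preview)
Your argument is correct and is exactly the approach the paper intends: Theorem~\ref{thm5} is stated simply ``as a consequence of Theorem~\ref{thm4}'' with no further proof, and your induction on $\dim L$ via the Projection Property, bottoming out in the classical non-emptiness of the spectrum of a single bounded operator, is precisely the implicit argument. The only detail you make explicit that the paper does not is the existence of a codimension-one ideal (any hyperplane through $L^2$), which is standard for solvable $L$.
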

\section{Some Consequences}

\indent In this section we shall see some consequences of the main theorems.\par
\indent Let $E$ be a Banach space and $L$ a complex finite dimensional solvable
Lie algebra acting on $E$ as bounded operators.\par
\indent One of the well known properties of Taylor joint spectrum for an $n$-tuple of 
m.c.o. acting on $E$ is $Sp(a,E)\subseteq \Pi B[0,\parallel a_i\parallel]$. In the 
noncommutative case, as we have seen in section 2, this property fails.\par
\indent However, if the Lie algebra is nilpotent, it is still true.\par
\begin{pro}\label{pro6} Let $L$ be a nilpotent Lie algebra which acts as bounded 
operators on a Banach space $E$.\par
\indent Then, $Sp(L,E)\subseteq \{f\in L^*:\hbox{ }\mid f(x)\mid\le\parallel x\parallel
\hbox{, }x\in L \}$.\end{pro}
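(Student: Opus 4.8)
The plan is to argue by contraposition: it suffices to show that if $f\in L^{*}$ is a character of $L$ (i.e. $f(L^{2})=0$) and $|f(x)|>\parallel x\parallel$ for some $x\in L$, then $H_{*}(L,E\otimes\Bbb C(f))=0$, so that $f\notin Sp(L,E)$. First note that such an $x$ cannot lie in $L^{2}$, since otherwise $f(x)=0\le\parallel x\parallel$; hence the image of $x$ in $L/L^{2}$ is nonzero, and I may choose a hyperplane of $L/L^{2}$ missing it and let $I$ be its preimage in $L$. Then $I$ is a codimension-one subspace of $L$ containing $L^{2}$, hence an ideal of $L$; it is nilpotent, $x\notin I$, and $L=I\oplus\Bbb C x$. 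Write $g=f|_{I}$, a character of $I$.

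Next I would invoke the short exact sequence of complexes used in the proof of Theorem \ref{thm4} (constructed as in \cite{T}), $0\to(\wedge^{*}I\otimes E,d(g))\to(\wedge^{*}L\otimes E,d(f))\to(\wedge^{*}I\otimes E,d(g))\to 0$, the quotient being the first complex shifted down one degree, and pass to the associated long exact homology sequence; its connecting homomorphism is then an endomorphism $\delta$ of $H_{*}(I,E\otimes\Bbb C(g))$. The crucial step is to identify $\delta$, up to sign, with the operator induced on homology by $\Theta-f(x)$, where $\Theta$ acts on $\wedge^{*}I\otimes E$ as the derivation of $\wedge^{*}I$ extending $\mathrm{ad}(x)\colon I\to I$ (this makes sense since $\mathrm{ad}(x)I\subseteq[L,L]=L^{2}\subseteq I$) together with the bounded operator $x\in L(E)$ on the $E$-factor. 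Because $[x,I]\subseteq L^{2}$ and $f(L^{2})=0$, the functional $g$ is $\mathrm{ad}(x)$-invariant, so $\Theta$ commutes with $d(g)$ and descends to $H_{*}(I,E\otimes\Bbb C(g))$. This identification, which carries all the sign bookkeeping, is the Lie-algebra counterpart of the classical fact that adjoining one operator $a_{n}$ to a commuting tuple contributes the connecting map $a_{n}-\lambda_{n}$; it is the main obstacle.

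Granting it, the conclusion is short. Since $L$ is nilpotent, $\mathrm{ad}(x)$ is nilpotent on $L$, hence on $I$ and on each $\wedge^{p}I$; and the derivation part of $\Theta$ commutes with the operator-$x$ part, since they act on different tensor factors. For two commuting operators one of which is nilpotent, a terminating Neumann series gives $\sigma(A+N)\subseteq\sigma(A)$, whence $\sigma(\Theta)\subseteq\sigma(x)\subseteq B[0,\parallel x\parallel]$ (the operator $x$ on the finite direct sum $\wedge^{*}I\otimes E$ has spectrum $\sigma(x)$). As $|f(x)|>\parallel x\parallel$, the operator $\Theta-f(x)$ is invertible on $\wedge^{*}I\otimes E$; it commutes with $d(g)$, hence so does its inverse, and therefore it induces an invertible operator on $H_{p}(I,E\otimes\Bbb C(g))$ for every $p$. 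Thus $\delta$ is bijective in all degrees, and the long exact sequence forces $H_{q}(L,E\otimes\Bbb C(f))=0$ for all $q$. Hence $f\notin Sp(L,E)$, which is exactly the asserted inclusion.
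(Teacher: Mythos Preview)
Your argument is sound but takes a genuinely different route from the paper's. The paper proceeds by induction on $\dim L$: using a Jordan--H\"older flag $(L_i)_{0\le i\le n}$ with $[L,L_i]\subseteq L_{i-1}$ (available because $L$ is nilpotent, and in particular $L^2\subseteq L_{n-2}$ when $L$ is non-abelian), it observes that every $x\in L$ lies in some codimension-one ideal---either $L_{n-1}$ or the companion ideal $L'_{n-1}=\langle X_1,\ldots,X_{n-2},X_n\rangle$ after a suitable choice of adapted basis---and then applies the projection property (Theorem~\ref{thm4}) together with the inductive hypothesis to that ideal to read off $|f(x)|\le\parallel x\parallel$. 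You instead fix the offending $x$, place it \emph{outside} a codimension-one ideal $I$, and annihilate $H_*(L,E\otimes\Bbb C(f))$ directly from the long exact sequence by proving the connecting endomorphism is invertible; nilpotency of $L$ enters only through the nilpotence of $\mathrm{ad}(x)$ on $\wedge I$. The paper's proof is shorter because all the work is exported to Theorem~\ref{thm4}; yours is more self-contained, makes transparent that the bound is exactly the spectral radius of $x$, and in fact recycles the mechanism from the boundedness half of Theorem~\ref{thm3}, where your $\Theta-f(x)$ is precisely the triangular operator called $L_p$ there. The step you flag as the main obstacle---identifying $\delta$ up to sign with the map induced by $\Theta-f(x)$---is routine once written out: lift $e\otimes\omega\in E\otimes\wedge^{p-1}I$ to $e\otimes(\omega\wedge x)$, apply $d(f)$, and collect the terms landing in $E\otimes\wedge^{p-1}I$; one finds $(-1)^{p+1}\bigl(e\cdot x\otimes\omega - e\otimes\mathrm{ad}(x)\omega - f(x)\,e\otimes\omega\bigr)$, and commutation of $\Theta$ with $d(g)$ follows from $g\circ\mathrm{ad}(x)=f([x,I])=0$.
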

\begin{proof}
\indent We proceed by induction on $\dim L$. If $\dim L=1$, we have nothing to verify.\par
\indent We suppose that the proposition holds for every nilpotent Lie algebra $L'$ such that
$\dim L'<n$.\par
\indent If $\dim L=n$, by \cite[Section 4.1, Proposition 1]{B}, there is a Jordan H\"older sequence $S=(L_i)_{0\le i\le n}$, such
that $[L,L_i]\subseteq L_{i-1}$.\par
\indent Let $\{X_i\}_{1\le i\le n}$ be a basis of $L$ such that $\{X_j\}_{1\le j\le i}$ generates
$L_i$.\par
\indent Let $L'_{n-1}$ be the vector subspace generated by $\{ X_i\}_{1\le i\le n, i\ne n-1}$.
As $[L,L_{n-1}']\subseteq L_{n-2}\subseteq L_{n-1}'$, $L_{n-1}'$ is an ideal . Besides,
$L_{n-1}+L_{n-1}'=L$.\par
\indent Then, by means of Theorem \ref{thm4} and the inductive hypothesis, we complete the inductive 
argument and the proposition.\end{proof}
\indent Now, we deal with some consequences of the projection property.\par
\begin{pro}\label{pro7}Let $L$ and $E$ be as usual. If $I$ is an ideal contained in $L^2$, 
then $Sp(I,E)=0$. In particular, $Sp(L^2,E)=0$.
\end{pro}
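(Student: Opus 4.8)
The plan is to read the result off from the projection property, Theorem \ref{thm4}, together with the non-emptiness theorem, Theorem \ref{thm5}. First I would note that $I$, being an ideal of the solvable algebra $L$, is itself a finite dimensional solvable Lie algebra acting on $E$ by bounded operators, so that $Sp(I,E)$ is defined; let $\pi\colon L^*\to I^*$ be the restriction map appearing in Theorem \ref{thm4}, so that $Sp(I,E)=\pi(Sp(L,E))$.

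The one substantive observation is then immediate. By Definition \ref{df2}, every $f\in Sp(L,E)$ is a character of $L$, i.e.\ $f(L^2)=0$. Since $I\subseteq L^2$ by hypothesis, this forces $f|_I=0$, that is, $\pi(f)=0$ in $I^*$. Hence $Sp(I,E)=\pi(Sp(L,E))\subseteq\{0\}$.

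To upgrade this inclusion to an equality I would invoke Theorem \ref{thm5}: $Sp(L,E)$ is non-void, hence so is its image $\pi(Sp(L,E))=Sp(I,E)$, and therefore $Sp(I,E)=\{0\}$. For the \emph{in particular} clause it suffices to take $I=L^2$, which is always an ideal of $L$ and is trivially contained in $L^2$, so the statement just proved gives $Sp(L^2,E)=\{0\}$.

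I do not anticipate a genuine obstacle here; the only points deserving a word of care are that $I$ is solvable, so that $Sp(I,E)$ is meaningful, and that the zero functional actually belongs to $Sp(I,E)$ rather than merely dominating it from above — the latter being exactly where the non-emptiness theorem is used.
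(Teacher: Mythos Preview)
Your argument is correct and is essentially the paper's own proof: apply the projection property to get $Sp(I,E)=\pi(Sp(L,E))$, then observe that every $f\in Sp(L,E)$ vanishes on $L^2\supseteq I$, so the image is $\{0\}$. Your explicit appeal to Theorem \ref{thm5} to secure $0\in Sp(I,E)$ is a small clarification the paper leaves implicit, but the approach is the same.
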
 
\begin{proof}
\indent By the projection property, $Sp(I,E)=\pi (Sp(L,E))$, where $\pi$ is the projection 
from $L^*$ onto $I^*$. However, as $Sp(L,E)$ is a subset of characters of $L$, $f\mid_I=0$,
if $I\subseteq L^2$.\end{proof}
\begin{pro}\label{pro8}Let $L$ and $E$  be as in Proposition \ref{pro6}. If $Sp(L,E)=0$, then
$Sp(x)=0$, $\forall x\in L$. \end{pro}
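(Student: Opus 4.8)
The plan is to reduce, by iterating the projection property (Theorem~\ref{thm4}), from the hypothesis $Sp(L,E)=0$ to a statement about the one dimensional subalgebra generated by an arbitrary $x\in L$, where our spectrum is nothing but the classical spectrum of the operator $x$. Concretely, fix $x\in L$ and set $\langle x\rangle=\Bbb Cx$, a (necessarily commutative) Lie subalgebra of $L(E)$. By the remark following Definition~\ref{df2}, $Sp(\langle x\rangle,E)$ coincides with the Taylor joint spectrum of the one-tuple $(x)$, i.e.\ with the ordinary spectrum $Sp(x)$ once $\langle x\rangle^*$ is identified with $\Bbb C$ via $g\mapsto g(x)$; hence $Sp(\langle x\rangle,E)=0$ is equivalent to $Sp(x)=0$, and it suffices to prove the former.

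Next I would build a subnormal chain from $\langle x\rangle$ up to $L$. Since $L$ is nilpotent, by the standard fact that in a nilpotent Lie algebra every proper subalgebra $H$ is strictly contained in its normalizer $N_L(H)=\{y\in L:[y,H]\subseteq H\}$ (and $H$ is, by definition of the normalizer, an ideal of $N_L(H)$), iterating the normalizer starting from $\langle x\rangle$ produces a chain
$$
\langle x\rangle=H_0\subseteq H_1\subseteq\cdots\subseteq H_k=L
$$
in which each $H_j$ is an ideal of $H_{j+1}$. Every $H_j$, being a subalgebra of the nilpotent algebra $L$, is itself nilpotent and acts on $E$ as bounded operators, so each pair $(H_{j+1},H_j)$ satisfies the standing hypotheses of Theorem~\ref{thm4}.

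Then I would run the descent. By hypothesis $Sp(H_k,E)=Sp(L,E)=0$. If $Sp(H_{j+1},E)=0$, applying Theorem~\ref{thm4} to the ideal $H_j$ of $H_{j+1}$ gives $Sp(H_j,E)=\pi(Sp(H_{j+1},E))$, where $\pi\colon H_{j+1}^*\to H_j^*$ is the restriction map; since the restriction of the zero functional is the zero functional, $Sp(H_j,E)=0$. Downward induction on $j$ then yields $Sp(H_0,E)=Sp(\langle x\rangle,E)=0$, hence $Sp(x)=0$; as $x\in L$ was arbitrary, the proposition follows.

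The only step requiring real care is the existence of the subnormal chain, and this is exactly where nilpotency of $L$ enters in an essential way: for a general solvable $L$ a one dimensional subalgebra need not be subnormal (already in the two dimensional non-abelian Lie algebra), so this argument genuinely needs the hypothesis ``$L$ as in Proposition~\ref{pro6}''. The remaining ingredients — that each $H_j$ is nilpotent, that the restriction of $0$ is $0$, and that the one operator instance of our spectrum is the ordinary spectrum — are routine.
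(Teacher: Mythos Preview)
Your proof is correct and follows the same overall strategy as the paper's: repeatedly apply the projection property (Theorem~\ref{thm4}) along a chain of ideals so as to descend from $L$ to a one-dimensional subalgebra containing $x$, where the spectrum reduces to the ordinary operator spectrum. The only difference is in how the chain is manufactured: the paper runs an induction on $\dim L$ using the codimension-one ideals $L_{n-1}$, $L'_{n-1}$ constructed in the proof of Proposition~\ref{pro6}, whereas you fix $x$ from the outset and produce a subnormal series $\langle x\rangle=H_0\subset H_1\subset\cdots\subset H_k=L$ by iterating normalizers, invoking the standard fact that in a nilpotent Lie algebra every proper subalgebra is strictly contained in its normalizer. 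Both devices encode the same structural feature of nilpotent Lie algebras (abundance of ideals / subnormality of all subalgebras), so the two arguments are minor variants of one another; yours has the advantage of making the chain for a given $x$ explicit, while the paper's keeps the induction uniform in $x$.
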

\begin{proof}
\indent By means of and inductive argument, the ideals $L_{n-1}$, $L_{n-1}'$ of 
Proposition \ref{pro6} and Theorem \ref{thm4}, we conclude the proof.\end{proof}
\begin{pro} \label{pro9}Let $L$ and $E$ be as usual. Then, if $x\in L^2$, 
$Sp(x)=0$.\end{pro}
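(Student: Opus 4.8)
The plan is to obtain Proposition~\ref{pro9} by bootstrapping from the two preceding propositions together with a standard structural fact about solvable Lie algebras. Here $Sp(x)$ is to be read as $Sp(\langle x\rangle, E)$, the Taylor spectrum of the single bounded operator $x$, regarded as a subset of $\langle x\rangle^{*}\cong\Bbb C$; thus the claim $Sp(x)=0$ is the assertion that $x$ acts quasinilpotently on $E$.

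The first step is to recall that, since $L$ is a complex finite dimensional solvable Lie algebra, its derived algebra $L^{2}=[L,L]$ is nilpotent; this is classical, e.g.\ \cite[Section~5.3]{B} or \cite[Chapter~1]{D}. As a Lie subalgebra of $L\subseteq L(E)$, $L^{2}$ acts on $E$ as bounded operators, so the pair $(L^{2},E)$ satisfies the hypotheses of Proposition~\ref{pro6}, and a fortiori those of Proposition~\ref{pro8}. The second step then uses Proposition~\ref{pro7}: taking $I=L^{2}$ there (an ideal of $L$ contained in $L^{2}$) gives $Sp(L^{2},E)=0$, which is in fact stated explicitly in that proposition. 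Finally, applying Proposition~\ref{pro8} to the nilpotent Lie algebra $L^{2}$ in place of $L$, the vanishing $Sp(L^{2},E)=0$ forces $Sp(x)=0$ for every $x\in L^{2}$, which is precisely the statement of Proposition~\ref{pro9}.

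The only point that requires genuine attention is the applicability of Proposition~\ref{pro8}: its statement is formulated for a nilpotent Lie algebra of bounded operators, so the whole argument rests on the fact that the derived algebra of a solvable Lie algebra is nilpotent; once that is granted, the proof is essentially a chain of citations. I would also record the routine verifications that $L^{2}$ is an ideal of $L$ and that the identification $\langle x\rangle^{*}\cong\Bbb C$ used here is the one implicit in Propositions~\ref{pro7} and \ref{pro8}, so that all occurrences of $Sp(\cdot)$ refer to the same object. Beyond this I do not anticipate any real obstacle.
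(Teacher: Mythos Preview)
Your proposal is correct and follows essentially the same route as the paper: first observe that $L^{2}$ is nilpotent because $L$ is solvable, then invoke Proposition~\ref{pro7} to get $Sp(L^{2},E)=0$, and finally apply Proposition~\ref{pro8} to the nilpotent algebra $L^{2}$ to conclude $Sp(x)=0$ for every $x\in L^{2}$. The additional care you take in checking the hypotheses of Proposition~\ref{pro8} and clarifying the meaning of $Sp(x)$ is welcome but does not deviate from the paper's argument.
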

 \begin{proof}
\indent First of all, recall that if $L$ is a solvable Lie algebra, then $L^2$ is a nilpotent
Lie algebra. Therefore, by Proposition \ref{pro7}, $Sp(L^2,E)=0$, and by Proposition \ref{pro8}, $Sp(x)=0$ $\forall x\in L^2$.
\end{proof}
\begin{rema}\rm
\indent Note that the example of section 2 shows that the projection property fails for subspaces
which are not ideals (take $I=\langle x \rangle$). Clearly this implies that the spectral
mapping theorem also fails in the noncommutative case.
\end{rema}

\bibliographystyle{amsplain}

\begin{thebibliography}{99}

\bibitem{BCV} P.Bernat et al., Repr\'esentations des Groupes de Lie
R\'esolubles, Monographies de la Soci\'et\'e Math\'ematique de France N� 4, Dunod, 
Paris, 1972,

\bibitem{B}  N. Bourbaki,  \'El\'ements de Math\'ematique, Groupes et Algebres de
Lie, Chapitre I, Algebres de Lie, Fasc XXVI, Hermann, Paris, 1960.

\bibitem{CE}  H. Cartan and S. Eilenberg, Homological Algebra, Princeton 
University Press, 1956.

\bibitem{D} J. Dixmier, Enveloping Algebras, Noth-Holland Publ. Co., 1977.

\bibitem{S}  J. P. Serre, Lie algebras and Lie groupes, Lectures given at Harvard
University, 1965.

\bibitem{T}  J. F. Taylor, A joint spectrum for several commuting operators,  J. Functional Analysis
6, 1976, 172-191.

\bibitem{V}  V. S. Varadarajan, Lie groupes, Lie algebras and their representations, 
Prentice Hall.
\end{thebibliography}

\vskip.3truecm
\noindent Enrico Boasso\par
\noindent E-mail address: enrico\_odisseo@yahoo.it

\end{document}